\documentclass[reqno, 12pt, reqno]{amsart}
\usepackage{amsfonts, amsthm, amsmath, amssymb}
\usepackage{hyperref}
\usepackage{pstricks}
\usepackage{pstricks-add}

\RequirePackage{mathrsfs} \let\mathcal\mathscr

\numberwithin{equation}{section}

\newtheorem{theorem}{Theorem}[section]
\newtheorem{lemma}[theorem]{Lemma}

\theoremstyle{definition}
\newtheorem*{ack}{Acknowledgements}

\newtheorem*{rem}{Remark}

\usepackage{color}
\definecolor{red}{rgb}{1,0,0}
\definecolor{blue}{rgb}{.2,.6,.75}
\definecolor{green}{rgb}{.4,.7,.4}

\renewcommand{\d}{\mathrm{d}}
\renewcommand{\phi}{\varphi}

\newcommand{\PP}{\mathbb{P}}

\newcommand{\FF}{\mathbb{F}}
\newcommand{\ZZ}{\mathbb{Z}}

\newcommand{\NN}{\mathbb{N}}
\newcommand{\QQ}{\mathbb{Q}}
\newcommand{\RR}{\mathbb{R}}

\renewcommand{\leq}{\leqslant}

\renewcommand{\geq}{\geqslant}

\definecolor{lightgray}{gray}{0.85}

\usepackage{pst-pdgr, pstricks, graphicx}%
\usepackage{auto-pst-pdf} 
\usepackage{caption}

\DeclareMathOperator{\Mod}{mod} 
\renewcommand{\bmod}[1]{\,(\Mod{#1})}


\newcommand{\eeq}{\end{equation}}
\newcommand{\beql}[1]{\begin{equation}\label{#1}}


\address{%
R\'egis de la Bret\`eche
\\
Institut de Math\'ematiques de Jussieu, UMR 7586
\\
Universit\'e Paris-Diderot
\\
UFR de Math\'ematiques, case 7012
\\
B\^atiment Sophie Germain
\\
75205 Paris Cedex 13, France} 
\email{regis.de-la-breteche@imj-prg.fr}

\address{%
Kevin Destagnol
\\
IST Austria
\\
Am Campus 1
\\
3400 Klosterneuburg, Austria} 
\email{kevin.destagnol@ist.ac.at}

\address{%
Jianya Liu
\\
School of Mathematics
\\
Shandong University
\\
Jinan
\\
Shandong 250100
\\
China} \email{jyliu@sdu.edu.cn}

\address{%
Jie Wu\\
CNRS, UMR 8050\\
Laboratoire d'Analyse et de Math\'ematiques Appliqu\'ees\\
Universit\'e Paris-Est Cr\'eteil\\
61 Avenue du G\'en\'eral de Gaulle\\
94010 Cr\'eteil cedex\\
France
}
\email{jie.wu@math.cnrs.fr}

\address{%
Yongqiang Zhao
\\
Westlake University
\\
Hangzhou
\\
Zhejiang 310024    
\\
China}
\email{yzhao@wias.org.cn}

\begin{document}

\title{On a certain non-split  cubic surface} 
\author{R. de la Bret\`eche, K. Destagnol, J. Liu, J. Wu \& Y. Zhao}
\maketitle

\begin{abstract} 
In this note, we establish an asymptotic formula with a power-saving error term for the number of rational points of bounded height on the 
singular cubic surface of $\mathbb{P}^3_{\QQ}$ 
$$
x_0(x_1^2 +  x_2^2)=x_3^3
$$  
in agreement with the Manin-Peyre conjectures. 

\end{abstract}

\section{Introduction and results}

Let $V\subset \PP_{\QQ}^3$ be the cubic surface defined by 
$$ 
x_0(x_1^2+x_2^2)-x_3^3=0. 
$$
The surface $V$ has three singular points $\xi_1=[1:0:0:0]$,  $\xi_2=[0:1:i:0]$ 
and $\xi_3=[0:1:-i:0]$.
It is easy to see that the only three lines contained in    $V_{\overline{\QQ}}=V\times_{{\rm{Spec}}(\QQ)}{\rm{Spec}}(\overline{\QQ})$ are
$$\ell_1:=\{x_3=x_1-ix_2=0\},\quad
\ell_2:=\{x_3=x_1+ix_2=0\},
$$  
and $$\ell_3:=\{x_3=x_0=0\}.$$
Clearly both $\ell_1$ and $\ell_2$ pass through $\xi_1$, which is actually the only rational point lying on these two lines. 

Let $U=V\smallsetminus  \{\ell_1\cup\ell_2\cup \ell_3\}$, 
and $B$ a parameter that can approach infinity. 
In this note we are concerned with the behavior of the counting function
$$
N_U( B)=\#\{{\bf x}\in U(\QQ): H({\bf x})\leq B\},
$$
where $H$ is the anticanonical height function on  $V$ defined by
\begin{equation}
H({\bf x}):=\max\Big\{ |x_0|,\, \sqrt{x_1^2+x_2^2},\,|x_3|\Big\}\label{defH}
\end{equation}
where each $x_j\in \ZZ$ and $\gcd(x_0, x_1, x_2, x_3)=1$. 
The main result of this note is the following. 

\begin{theorem}\label{t:main}
There exists a constant $\vartheta>0$ and a polynomial 
$Q\in \RR[X]$ of degree $3$ such that  
\begin{equation}\label{N_UB=}
N_U(B)=
BQ(\log B)+O(B^{1-\vartheta}).
\end{equation}
The leading coefficient $C$ of $Q$ satisfies 
\begin{equation}\label{TmC=}
C= \frac{7}{216}(3\pi)\bigg(\frac{\pi}{4}\bigg)^3 \tau 
\end{equation}
with 
$$
\tau = \prod_{p }\bigg( 1-\frac{1}{p} \bigg)^4\bigg( 1-\frac{\chi(p)}{p} \bigg)^3  
\bigg(1+\frac{2+3\chi(p)+2\chi^2(p)}{p}+\frac{\chi^2(p)}{p^2}\bigg)
$$
and $\chi$ the non-principal character modulo $4$. 
The constant $C$ agrees with Peyre's prediction \cite[Formule 5.1]{P03}.
\end{theorem}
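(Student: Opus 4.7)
The plan is to exploit the factorization $x_1^2+x_2^2=(x_1+ix_2)(x_1-ix_2)$ in the Gaussian integers: I recast the counting problem over $\ZZ[i]$, perform an arithmetic descent producing a universal-torsor-type parametrization, and then estimate the resulting multiple sum via M\"obius inversion and the hyperbola method, with analytic input from the Dedekind zeta function $\zeta_{\QQ(i)}(s)=\zeta(s)L(s,\chi)$. The appearance of $\chi$, the non-principal character modulo $4$, in \eqref{TmC=} already strongly suggests that the natural framework is arithmetic over $\ZZ[i]$.

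\textbf{Descent.} Setting $z=x_1+ix_2\in\ZZ[i]$, the defining equation becomes $x_0\,z\bar z=x_3^3$ with height $H(\x)=\max(|x_0|,|z|,|x_3|)$, and the primitivity condition $\gcd(x_0,x_1,x_2,x_3)=1$ becomes the Gaussian-ideal coprimality $(x_0,z,\bar z,x_3)=(1)$. A prime-by-prime analysis --- distinguishing rational primes that split, are inert, or ramify in $\ZZ[i]$ --- pins down the admissible $\pi$-adic valuations of the four quantities on both sides of $x_0z\bar z=x_3^3$. This yields a parametrization of integer solutions by a collection of torsor variables, some lying in $\ZZ$ and some in $\ZZ[i]$: roughly, one finds decompositions of the shape $x_0=au^3$, $z=bw^3$ (up to units) with explicit divisibility and coprimality constraints linking them to the inert and ramified parts of $x_3$. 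The remaining global coprimality is then resolved by M\"obius inversion over $\ZZ$ and over $\ZZ[i]$.

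\textbf{Main term and power saving.} After descent, $N_U(B)$ is a multiple sum whose innermost stage counts Gaussian lattice points of bounded norm in a congruence class --- a Gauss-circle-type problem whose main term is an archimedean volume with a power-saving error. Summing over the outer variables assembles a Dirichlet series that factors through $\zeta(s)$ and $L(s,\chi)$ times an absolutely convergent Euler product and has a pole of order $4$ at $s=1$, consistent with the expected $B(\log B)^3$ main term. Shifting contours in Perron's formula, using the standard zero-free region and convexity bounds for $\zeta(s)$ and $L(s,\chi)$, should produce the asymptotic \eqref{N_UB=}. The main obstacle will be precisely to combine the Gauss-circle-type saving on the inner sum with the saving on the outer sums without losing it to truncation and M\"obius-inversion errors; this typically requires a careful dyadic decomposition together with some nontrivial cancellation in short sums over Gaussian primes.

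\textbf{Leading constant.} Finally, the constant $C$ is extracted as the residue at $s=1$ of the Dirichlet series above. I would factor it as a product of local densities, check that the Euler factor at each rational prime coincides with the bracketed expression in the definition of $\tau$, identify the archimedean factor $(3\pi)(\pi/4)^3$ with the real density (the $3\pi$ arising from integration against $x_1^2+x_2^2\leq B^2$, the $(\pi/4)^3$ from the elementary integrals over the remaining torsor variables), and confirm that the rational constant $\tfrac{7}{216}$ reproduces the $\alpha$-invariant of Peyre on the minimal desingularization of $V$.
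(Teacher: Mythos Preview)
Your plan is a legitimate alternative route, but it differs substantially from the paper's proof in Section~3. The paper does \emph{not} pass to $\ZZ[i]$ or count Gaussian lattice points. Instead it stays over $\ZZ$: writing $x_1^2+x_2^2=n_1n_2^2n_3^3$, $x_0=n_1^2n_2n_4^3$, $x_3=n_1n_2n_3n_4$ with $\mu^2(n_1n_2)=1$, the count becomes a sum of the multiplicative weight $r(n,m)$ (representations as two squares with a coprimality side-condition). The entire analytic input is then packaged into a \emph{two-variable} Dirichlet series $F(s_1,s_2)$, factored explicitly as
\[
\zeta(3s_1)\,\zeta_{\QQ(i)}(3s_2)^2\,\zeta_{\QQ(i)}(s_1+2s_2)\,\zeta_{\QQ(i)}(2s_1+s_2)\,G(s_1,s_2)
\]
with $G$ holomorphic past the line $\Re e(s_j)=\tfrac16$, and the asymptotic with power-saving error follows by a direct citation to the general Tauberian theorems of \cite{B01}. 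No Gauss-circle estimate, no hyperbola method, no contour shift is written out here.

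What each approach buys: the paper's route is short and mechanical once the Euler product of $F$ is computed, because the delicate error-combining you flag as ``the main obstacle'' is entirely absorbed into the black box of \cite{B01}; and since the height condition naturally splits into two independent constraints ($x_0\le B$ and $x_1^2+x_2^2\le B^2$), a multi-variable Dirichlet series is the natural object rather than the single-variable one with ``a pole of order $4$ at $s=1$'' that you sketch. Your $\ZZ[i]$ descent is closer in spirit to the treatments of the split analogue $x_0x_1x_2=x_3^3$ over number fields (Derenthal--Janda \cite{DJ13}, Frei \cite{Fr13}) and should in principle also yield the theorem, but carrying it out would require you actually to produce the explicit torsor parametrization and then execute the error-management step you only name. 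One small correction on the constant: the factor $(\pi/4)^3$ does not arise from ``elementary integrals over the remaining torsor variables'' but from $L(1,\chi)^3$, i.e.\ from the residues of the $\zeta_{\QQ(i)}$ factors; in the paper's computation the archimedean volume integral contributes the combinatorial factor $7/72$, which after regrouping gives the $7/216$ and the $3\pi$.
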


\begin{rem}
If follows from the arguments in \cite{B98} or \cite{LWZ17} that, at least, 
any $\vartheta<\tfrac{1}{9}$ 
is acceptable in Theorem~\ref{t:main}, and further improvements are possible.   
\end{rem}

\medskip

The Manin-Peyre conjectures for smooth toric varieties were established by Batyrev and Tschinkel in their seminal work \cite{BT98}.  Since our cubic surface $V$ is a (non-split) toric surface, the main term of the asymptotic formula \eqref{N_UB=} can be derived from \cite{BT98}. 
In addition to providing a different proof of the Manin-Peyre's conjectures  for  $V$ 
and  to getting a power-saving error term of the counting function $N_U(B)$, this note also serves to complement the results in \cite{LWZ17},  in which Manin's conjecture for the cubic hypersurfaces $S_n\subset \PP^{n+1} $ defined by the equation
$$x_0^3 = (x_1^2 + \ldots+ x_n^2)  x_{n+1}$$
with $n=4k$ was established. The cubic surface $V$ is the case for $ n=2$.

We conclude the introduction by a brief discussion of  the split toric surface of $\PP_{\QQ}^3$ given by
$$V' :   \hskip 3mm   x_0x_1x_2=x_3^3. $$  
The variety $V'$ is isomorphic to $V$ over $\QQ(i)$ and was well studied by a number of  authors. Manin's conjecture for 
$V'$ is a consequence of 
Batyrev and Tschinkel \cite{BT98}.  Others  include the first author \cite{B98},   the first author and Swinnerton-Dyer  \cite{BSD},  
Fouvry  \cite{Fo}, Heath-Brown and Moroz \cite{HBM}  and Salberger  \cite{Sa}.  Derenthal and Janda \cite{DJ13} established Manin's conjecture for $V'$ over imaginary quadratic fields of class number one and Frei \cite{Fr13}  further generalized their work to arbitrary number fields. 
Of the unconditional  asymptotic formulae obtained, the strongest is the one
in \cite{B98}, which yields the estimate
$$ 
N_U(B)=BP(\log B) + O\big(B^{7/8}\exp(-c(\log B)^{3/5}(\log\log B)^{-1/5})\big),
$$ 
where $U$ is a Zariski  open subset of $V'$, and $P$ is a polynomial of degree~$6$ and $c$ is a positive constant.
In \cite{BSD}, even the second term of the counting function $N_U(B)$ is established  under the Riemann Hypothesis as well as the assumption 
that all the zeros of the Riemann $\zeta$-function are simple.

\section{Geometry and Peyre's constant}


In \cite{P95}, Peyre proposed a general conjecture about the shape of the leading constant arising in the asymptotic formula for the number of points of bounded height but only for smooth Fano varieties.

The surface $V$ that we study in this note is singular so we can not apply directly this conjecture and \cite[D\'efinition 2.1]{P95}. To get around this, we construct explicitly in this section a minimal resolution $\pi:\widetilde{V} \rightarrow V$ of $V$ and show that for $U=V\smallsetminus\{\ell_1\cup\ell_2\cup\ell_3\}$ and $\widetilde{U}=\pi^{-1}(U)$, we have $\pi_{|\widetilde{U}}:\widetilde{U} \cong U$. This implies that our counting problem on $V$ can be seen as a counting problem on the smooth variety $\widetilde{V}$ since
$$
N_U(B)=\#\{\mathbf{x}\in \widetilde{U}(\QQ) : H\circ \pi({\bf x})\leq B\}
$$
where $H \circ \pi$ is an anticanonical height function on $\widetilde{V}$. Indeed, by \cite[Lemma 1.1]{CT88} the surface $V$ has only du Val singularities which are canonical singularities (see \cite[Theorem 4.20]{KM98}) and alluding to \cite[2.26, 4.3, 4.4 and 4.5]{KM98}, we can conclude that $\pi^{\ast}K_{V}=K_{\widetilde{V}}$ where $K_V$ and $K_{\widetilde{V}}$ denote the anticanonical divisors of $V$ and $\widetilde{V}$ respectively. 
However, $\widetilde{V}$ is not a Fano variety and therefore we still can not apply \cite[D\'efinition 2.1]{P95}.

We nevertheless establish in this section that $\widetilde{V}$ is ``almost Fano" in the sense of \cite[Definition 3.1]{P03}. Alluding to the fact that the original conjecture of Peyre has been refined by Batyrev and Tscinkel \cite{BT98b} and Peyre \cite{P03} to this setting, we may refer to \cite[Formule empirique 5.1]{P03} to interpret the constant $C$ arising in our Theorem~\ref{t:main}. According to \cite[Formule empirique 5.1]{P03}, the  
 leading constant $C$ in our Theorem~\ref{t:main} takes the form  
\begin{equation}\label{C=alpha}
C=\alpha(\widetilde{V})\beta(\widetilde{V}) \tau(\widetilde{V}) 
\end{equation}
where $\alpha(\widetilde{V})$ is
a rational number defined in terms of the cone of effective divisors, $\beta(\widetilde{V})$ a cohomological invariant 
and $\tau(\widetilde{V})$ a Tamagawa number. For more details, see d\'efinition 4.8 of \cite{P03}.

Our main strategy to check that the constant $C$ in Theorem \ref{t:main} agrees with the prediction \cite[Formule empirique 5.1]{P03} relies in a crucial way on the (non-split) toric structure of the surface $V$ and on results from \cite{BT98b}.


\subsection{Minimal resolution of $V$ and interpretation of the power of $\log B$}

We refer the reader to the following references for details about toric varieties over arbitrary fields \cite{Oda,Fulton1,Demazure, Danilov} and especially \cite{BT95, BT98} and \cite[End of \S 8]{Sa}.

The toric surface $V$ is easily seen to be an equivariant compactification of the non-split torus $T$ given by the equation $x_0(x_1^2+x_2^2)=1$. The torus $T$ is isomorphic to $R_{\QQ(i)/\QQ}(\mathbb{G}_m)$ where $R_{\QQ(i)/\QQ}(\cdot)$ denotes the Weil restriction functor and is split by the quadratic extension $k=\QQ(i)$. We now introduce $M=\widehat{T}_k:=\mbox{Hom}(T,k^{\times})$ the group of regular $k$-rational characters of $T$ and $N=\mbox{Hom}(M,\mathbb{Z})$. Alluding to \cite[Lemma 1.3.1]{Serre}, we see that $M \cong N \cong \mathbb{Z}\times \mathbb{Z}$ with the Galois group $G=\mbox{Gal}(k/\QQ)\cong \ZZ/2\ZZ$ interchanging the two factors. Let $(e_1,e_2)$ be a $\ZZ$-basis of $N$. In a similar manner as in \cite[Example 11.50]{Sa}, we denote by $\Delta$ the fan of $N_{\RR}=N \otimes \RR$ given by the rays $\rho_1, \rho_2, \rho'_2$ generated by $-e_1-e_2$, $-e_1+2e_2$ and $2e_1-e_2$.

\vspace{1.5cm}
\begin{figure}[h!]
\centering 
\begin{pspicture}(-1.2,-7)
\psgrid[griddots=10, subgriddiv=0, gridlabels=0pt,gridcolor=lightgray](-2,-2)(1,1)
\psline{->}(-1,-1)(-2,-2)
\psline{->}(-1,-1)(-2,1)
\psline{->}(-1,-1)(1,-2)
\rput(-2.15,-2.17){\color{black} $\rho_1$ \color{black}}
\rput(-2.15,1.1){\color{black} $\rho_2$ \color{black}}
\rput(1.15,-2.2){\color{black} $\rho'_2$ \color{black}}
\end{pspicture}
    \vspace{-4.5cm}
    \caption{The fan ${\Delta}$.}
\end{figure}
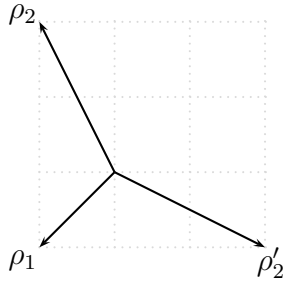

The fan $\Delta$ is $G$-invariant in the sense of \cite[Definition 1.11]{B98} and hence defines a non-split toric surface $P_{\Delta}$ over $\QQ$. Using the same arguments as in \cite[Example 11.50]{Sa}, one easily sees that the $k$-variety $P_{\Delta,k}=P_{\Delta} \otimes_{{\rm Spec}(\QQ)}{\rm Spec}(k)$ is given by the equation $x_3^3=x_0z_1z_2$ with $G$ exchanging $z_1$ and $z_2$. The change of variables $x_1=(z_1+z_2)/2$ and $x_2=(z_1-z_2)/(2i)$ yields that $P_{\Delta,k}$ is isomorphic to the variety of equation $x_3^3=x_0(x_1^2+x_2^2)$, all the variables being $G$-invariant. Hence, the surface $V$ is a complete algebraic variety such that $V\otimes_{{\rm Spec}(\QQ)}{\rm Spec}(k)$ is isomorphic to $P_{\Delta,k}$, the isomorphism being compatible with the $G$-actions. Then, theorem 1.12 of \cite{BT98} allows us to conclude that $V$ is given by the $G$-invariant fan $\Delta$ after noting that the assumption that the fan is regular is not necessary.

The fan $\Delta$ is not complete and regular in the sense of \cite[Definition 1.9]{BT98b} which accounts for the fact that $V$ is singular. As in \cite[Example 11.50]{Sa}, there exists a complete and regular refinement $\tilde{\Delta}$ of $\Delta$ given by the extra rays  $\tilde{\rho}_1, \tilde{\rho}_2, \tilde{\rho}_3, \tilde{\rho}'_1, \tilde{\rho}'_2, \tilde{\rho}'_3$ generated by $-e_1$, $-e_1+e_2$, $e_2$, $-e_2$, $e_1-e_2$ and $e_1$.

\vspace{1.2cm}
\begin{figure}[h!]
\centering 
\begin{pspicture}(-1.2,-7)
\psgrid[griddots=10, subgriddiv=0, gridlabels=0pt,gridcolor=lightgray](-2,-2)(1,1)
\psline{->}(-1,-1)(-2,-2)
\psline{->}(-1,-1)(-2,1)
\psline{->}(-1,-1)(1,-2)
\psline[linecolor=green]{->}(-1,-1)(-2,-1)
\psline[linecolor=green]{->}(-1,-1)(-2,0)
\psline[linecolor=green]{->}(-1,-1)(-1,0)
\psline[linecolor=green]{->}(-1,-1)(-1,-2)
\psline[linecolor=green]{->}(-1,-1)(0,-2)
\psline[linecolor=green]{->}(-1,-1)(0,-1)
\rput(-2.15,-2.2){\color{black} $\rho_1$ \color{black}}
\rput(-2.15,1.1){\color{black} $\rho_2$ \color{black}}
\rput(1.15,-2.2){\color{black} $\rho'_2$ \color{black}}
\rput(-2.15,-1){\color{green} $\tilde{\rho}_1$ \color{black}}
\rput(-2.15,0){\color{green} $\tilde{\rho}_2$ \color{black}}
\rput(-0.9,0.18){\color{green} $\tilde{\rho}_3$ \color{black}}
\rput(-1,-2.2){\color{green} $\tilde{\rho}'_1$ \color{black}}
\rput(0.27,-1){\color{green} $\tilde{\rho}'_3$ \color{black}}
\rput(0.27,-2.2){\color{green} $\tilde{\rho}'_2$ \color{black}}
\end{pspicture}
    \vspace{-4.5cm}
    \caption{The fan $\tilde{\Delta}$.}
\end{figure}
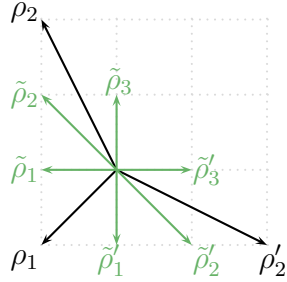

The toric surface $\widetilde{V}$ defined over $\QQ$ by the $G$-invariant fan $\tilde{\Delta}$ is then smooth by \cite[Theorems 1.10 and 1.12]{BT98} and, thanks to \cite[5.5.1]{Danilov} and \cite[\S2.6]{Fulton1}, comes with a proper equivariant birational morphism $\pi: \widetilde{V} \rightarrow V$ which is an isomorphism on the torus $T$. Here $T$ corresponds to the open subset $U=V\smallsetminus\{\ell_1\cup \ell_2\cup \ell_3\}.$ Now the proof of the proposition 11.2.8 of \cite{cox} yields that $\pi$ is a crepant resolution and hence that it is minimal since we are in dimension 2.

We note that thanks to \cite[Corollaire 3]{Demazure} and \cite[Proposition 1.15]{BT98}, the minimal resolution $\widetilde{V}$ is ``almost Fano" in the sense of \cite[Definition 3.1]{P03}.

Let us now turn to the computation of the Picard group of $\widetilde{V}$. To this end, we will exploit the exact sequence given by \cite[Proposition 1.15]{BT98}. With the notations of \cite[Proposition 1.15]{BT98}, we have $M^G \cong \mathbb{Z}$ generated by $e_1^{\ast}+e_2^{\ast}$ if $(e_1^{\ast},e_2^{\ast})$ is a $\ZZ$-basis of $M$. Moreover, a function $\varphi \in \mbox{PL}(\tilde{\Delta})^G$ being completely determined by its integer values on $\rho_1,\rho_2$ and $\tilde{\rho}_1,\tilde{\rho}_2$ and $\tilde{\rho}_3$, we have $\mbox{PL}(\tilde{\Delta})^G\cong \mathbb{Z}^5$. Finally, the $\ZZ$-module $M$ is a permutation module and therefore $H^1(G,M)$ is trivial. Bringing all of that together yields that $\mbox{rk}(\mbox{Pic}(\widetilde{V}))=5-1=4,$ which agrees with the prediction coming from Manin's conjecture regarding the power of $\log B$ in Theorem \ref{t:main}.

\subsection{The factor $\alpha$}

We will use the same method as in \cite[Lemma 5]{BBS} to compute the nef cone volume $\alpha(\widetilde{V})$ and we refer the reader to \cite[Lemma 5]{BBS} for more details and definitions.

Let $T_i$, $T'_i$, $\widetilde{T}_i$ and $\widetilde{T}'_i$ be the Zariski closures of the one dimensional tori corresponding respectively to the cones $\RR_{\geqslant 0} \rho_i$, $\RR_{\geqslant 0} \rho'_i$, $\RR_{\geqslant 0} \tilde{\rho}_i$ and $\RR_{\geqslant 0} \tilde{\rho}'_i$. We also introduce the $G$-invariant divisors
$$
D_1=T_1, \hspace{2mm} D_2=T_2+T'_2, \hspace{2mm} D_3=\widetilde{T}_1+\widetilde{T}'_1, \hspace{2mm} D_4=\widetilde{T}_2+\widetilde{T}'_2, \hspace{2mm} D_5=\widetilde{T}_3+\widetilde{T}'_3.
$$
Using \cite[Proposition 1.15]{BT98}, one immediately sees that $\mbox{Pic}(\widetilde{V})$ is generated by $D_1, D_2, D_3, D_4, D_5$ with the relation $D_5=2D_1+D_2-D_4$ and that the divisor 
$$
D_1+D_2+D_3+D_4+D_5 \sim 3D_1+2D_2+D_3
$$
is an anticanonical divisor for $\widetilde{V}$. Following the strategy of \cite[Lemma 5]{BBS} and using the same notations than in \cite[Lemma 5]{BBS}, it now follows that $C_{{\rm eff}}^{\vee}$ is the subset of $\RR^4_{\geqslant 0}$ given by $2z_1+z_2-z_4 \geqslant 0$ and that $H_{\widetilde{V}}$ is given by the equation $3z_1+2z_2+z_3=1$. Therefore, a straightforward computation finally yields
$$
\begin{aligned}
\alpha(\widetilde{V})&=\int_{0}^1(1-z_3)^2\mbox{d}z_3 \times \frac{1}{2}\mbox{Vol}\left\{ (z_1,z_4) \in \RR^2_{\geqslant 0} : \begin{array}{l}
 3z_1 \leqslant 1,\\
 2z_4-z_1\leqslant 1	
 \end{array}
\right\}\\
&=\frac{1}{6}\int_{z_1=0}^{\frac{1}{3}}\left(\int_{z_4=0}^{\frac{1+z_1}{2}}\mbox{d}z_4\right)\mbox{d}z_1=\frac{7}{216}.
\end{aligned}
$$

%
	\subsection{The factor $\beta$}
Let us now briefly justify that $\beta(\widetilde{V})=1$. We know that $\widetilde{V}$ is birational to the torus $R_{\QQ(i)/\QQ}(\mathbb{G}_m)$. But the open immersion $\mathbb{G}_{m,\QQ(i)} \hookrightarrow \mathbb{A}^2_{\QQ(i)}$ gives rise to an open immersion $R_{\QQ(i)/\QQ}(\mathbb{G}_m) \hookrightarrow \mathbb{A}^2_{\QQ}$ by taking the functor $R_{\QQ(i)/\QQ}(\cdot)$ and by alluding to \cite[Proposition 4.9]{Scha}. Hence, $R_{\QQ(i)/\QQ}(\mathbb{G}_m)$ is rational and so is $\widetilde{V}$. Finally this implies that $\beta(\widetilde{V})=1$ (see \cite[section 5]{BBS} for details).

\subsection{The Tamagawa number}
\subsubsection{Conjectural expression}
Let us choose $S=\{\infty,2\}$ and note that our definition will be independent of that choice. We have from \cite[Theorem 1.3.2]{BT95} that $\mbox{Pic}(\widetilde{V}_{\overline{\QQ}})$ is the free abelian group generated by the divisors $T_1, T_2, T'_2, \widetilde{T}_1, \widetilde{T}'_1, \widetilde{T}_2, \widetilde{T}'_2$ defined in \S 2.2 with the following $G$-action 
$$
\sigma(T_2)=T'_2, \quad \sigma(\widetilde{T}_i)=\widetilde{T}'_i \quad (i \in \{1,2\})
$$
if $\sigma$ denotes the complex conjugation. Alluding to \cite[Defintion 7.1]{Ja}, we have the following conjectural expression
$$ \tau(\widetilde{V}) :=\lim_{s\to 1^+}(s-1)^4L_S(s, \chi_{{\rm Pic}}(\widetilde{V}_{\overline{\QQ}}))\omega_\infty\prod_{p} \lambda_p^{-1}\omega_p
$$ 
where 
$$
L_S(s, \chi_{{\rm Pic}}(\widetilde{V}_{\overline{\QQ}}))=\prod_{p \not \in S}\det\left(\mbox{Id}-p^{-s}\mbox{Frob}_p \mid \mbox{Pic}(\widetilde{V}_{\overline{\QQ}})^{I_p}\right)^{-1}
$$
with $I_p$ the inertia group and $\mbox{Frob}_p$ a representative of the Frobenius automorphism and where
$$
\lambda_p=L_p(1, \chi_{{\rm Pic}}(\widetilde{V}_{\overline{\QQ}})), \quad \omega_{\infty}=\omega_{\infty, \widetilde{V}}(\widetilde{V}(\RR)), \quad \omega_{p}=\omega_{p,\widetilde{V}}(\widetilde{V}(\QQ_p))
$$
for measures $\omega_{v,\widetilde{V}}$ on $\widetilde{V}(\QQ_v)$ whose proper definitions are postponed to the next section (they are the measures $\omega_{\mathcal{K},v}$ defined in \cite[\S 2]{BT98}) and where $\lambda_p$ is taken to be 1 for $p \in S$.

 Let $\Re e(s)>1$. First we notice that for all $p\not \in S$, we have that $I_p$ is trivial since $p$ is not ramified in $\QQ(i)$. Then the Frobenius $\mbox{Frob}_p$ being trivial for all $p \equiv 1 \Mod{4}$, it is easy to see that in that case
$$
\det\left(\mbox{Id}-p^{-s}\mbox{Frob}_p \mid \mbox{Pic}(\widetilde{V}_{\overline{\QQ}})^{I_p}\right)=\left(1-\frac{1}{p^s}\right)^7.
$$
When $p \equiv 3 \Mod{4}$, $\mbox{Frob}_p$ is of order 2 with the same action than $\sigma$ on $\mbox{Pic}(\widetilde{V}_{\overline{\QQ}})$ and hence one sees immediately that
$$
\det\left(\mbox{Id}-p^{-s}\mbox{Frob}_p \mid \mbox{Pic}(\widetilde{V}_{\overline{\QQ}})^{I_p}\right)=\left(1-\frac{1}{p^s}\right)\left(1-\frac{1}{p^{2s}}\right)^3.
$$
Bringing all of this together yields that
$$
L_S(s, \chi_{{\rm Pic}}(\widetilde{V}_{\overline{\QQ}}))=\prod_{p>2}\bigg( 1-\frac{1}{p^s} \bigg)^{-4}\bigg( 1-\frac{\chi(p)}{p^s} \bigg)^{-3}
$$
and 
$$
\lim_{s\to 1}(s-1)^4L_S(s, \chi_{{\rm Pic}}(\widetilde{V}_{\overline{\QQ}}))=\frac{L(1,\chi)^2}{2^4}=\frac{1}{2^4} \times \left(\frac{\pi}{4}\right)^3
$$
and therefore
$$
\tau(\widetilde{V})=\left(\frac{\pi}{4}\right)^3\omega_{\infty} \times \frac{\omega_2}{2^4}\times \prod_{p>2} \bigg( 1-\frac{1}{p} \bigg)^{4}\bigg( 1-\frac{\chi(p)}{p} \bigg)^{3}\omega_p.
$$

\subsubsection{Construction of the Tamagawa measure}
Let us write here $V_{(i)}$ for the affine subset of $V$ where $x_i \neq 0$ with coordinates $x_j^{(i)}=x_j/x_i$ for $j \neq i$. Note that $V_{(i)}$ is defined by the equation 
$$
f^{(i)}(x_0^{(i)},\dots,\widehat{x_i^{(i)}},\dots,x_3^{(i)})=\left(\frac{x_3}{x_i}\right)^3-\frac{x_0}{x_i}\left(\left(\frac{x_1}{x_i}\right)^2+\left(\frac{x_2}{x_i}\right)^2\right)
$$
where $(x_0^{(i)},\dots,\widehat{x_i^{(i)}},\dots,x_3^{(i)})$ denotes $(x_0^{(i)},x_2^{(i)},x_3^{(i)})$ after removing the $i$-th component.
The same arguments as in \cite[\S 13]{FP} go through to yield that $\omega_{V} \cong \mathcal{O}_V(-1)$ and that such an isomorphism is given on $V_{(i)}$ by
$$
x_i^{-1} \longmapsto \frac{(-1)^{i+t}}{\partial f^{(i)}/\partial x_j^{(i)}}\mbox{d}x_k^{(i)}\wedge \mbox{d}x_{\ell}^{(i)}
$$
for $k<\ell \in \{0,1,2,3\}\smallsetminus\{i\}$, $\{i,j,k,\ell\}=\{0,1,2,3\}$ and $t=k+\ell$ if $k<i<\ell$ and $t=k+\ell-1$ otherwise. Moreover, we have already seen that $\omega_{\widetilde{V}}\cong \pi^{\ast}\omega_V$. The dual sections $\tau_i$ of $s_i$ in $\omega^{-1}_V\cong\mathcal{O}_V(1)$ define the embedding $V \hookrightarrow \mathbb{P}^3$ under consideration in this note and the morphism $\widetilde{V} \rightarrow V \hookrightarrow \mathbb{P}^3$ is given by the sections $\pi^{\ast}\tau_i$ of $H^0(\widetilde{V},\omega^{-1}_{\widetilde{V}})$. 

Consider now the subsequent Arakelov heights $(\omega^{-1}_{\widetilde{V}},(||.||_v)_{v \in {\rm Val}(\QQ)})$ and $(\omega^{-1}_{{V}},(||.||'_v)_{v \in {\rm Val}(\QQ)})$ defined by these global sections where for all $v \in \mbox{Val}(\QQ)$, $x\in V(\mathbb{Q}_v)$, $y \in \widetilde{V}(\mathbb{Q}_v)$, $\tau \in \omega^{-1}_{\widetilde{V}}$ and $\sigma \in \omega^{-1}_{{V}}$ we use respectively the $v$-adic metrics defined by
$$
||\tau||_v=\min_{0\leqslant i \leqslant 3 \atop \pi^{\ast}\tau_i \neq 0}\left\{\left| \frac{\tau}{\pi^{\ast}\tau_i(x)}\right|_v\right\}
$$
if $v$ is finite and 
$$
||\tau||_{\infty}=\min\left\{\min_{ i \in \{0,3\} \atop \pi^{\ast}\tau_i \neq 0}\left\{\left|\frac{\tau}{\pi^{\ast}\tau_i(x)}\right|_{\infty}\right\}, \left(\left| \frac{\pi^{\ast}\tau_1(x)}{\tau}\right|_{\infty}^2+\left|\frac{\pi^{\ast}\tau_2(x)}{\tau}\right|_{\infty}^2 \right)^{-\frac{1}{2}}\right\}
$$
if $v$ is the archimedean place and $\tau\neq 0$ and
$$
||\sigma||'_v=\min_{0\leqslant i \leqslant 3 \atop \tau_i \neq 0}\left\{\left| \frac{\sigma}{\tau_i(y)}\right|_v\right\}
$$
if $v$ is finite and
$$
||\sigma||'_{\infty}=\min\left\{\min_{ i \in \{0,3\} \atop \tau_i \neq 0}\left\{\left|\frac{\sigma}{\tau_i(y)}\right|_{\infty}\right\}, \left(\left| \frac{\tau_1(y)}{\sigma}\right|_{\infty}^2+\left|\frac{\tau_2(y)}{\sigma}\right|_{\infty}^2 \right)^{-\frac{1}{2}}\right\}
$$
if $v$ is the archimedean place and $\sigma\neq 0$. These heights correspond to the heights $H$ on $V$ and $H \circ \pi$ on $\widetilde{V}$ that we used in our counting problem. Applying the definition 2.2.1 of \cite{P03}, we get a measure $\omega_{v,\widetilde{V}}$ on $\widetilde{V}(\QQ_v)$ associated to the $v$-adic metric $||.||_v$ which is the measure defined in \cite[\S 2]{BT98} and used in \S 2.4.1 and a measure $\omega_{v,V}$ on $V(\QQ_v)$ associated to the $v$-adic metric $||.||'_v$.

\subsubsection{Computation of the archimedean density}

We follow once again the strategy adopted in \cite[\S 13]{FP}. One sees easily that $U=V_{(3)}$ and the same argument as in \cite[\S 13]{FP} shows that
$$
\omega_{\infty}=\omega_{\infty,\widetilde{V}}(\widetilde{V}(\RR))=\omega_{\infty,\widetilde{V}}(\pi^{-1}(U)(\RR)).
$$
Now the local coordinates $x_1^{(3)}-1$ and $x_2^{(3)}$ at the rational point $(1,1,0)$ of $U$ give an isomorphism $$U(\RR) \cong W=\{(z_1,z_2) \in \RR^2 : (z_1+1)^2+z_2^2 \neq 0\}$$ and a similar computation as in \cite[\S 13]{FP} yields 
$$
\begin{aligned}
\omega_{\infty,\widetilde{V}}(\pi^{-1}(U)(\RR))&=\int_{\RR^2} \frac{\mbox{d}z_1\mbox{d}z_2}{\max\left\{1,z_1^2+z_2^2,(z_1^2+z_2^2)^{3/2}\right\}}\\
&=\int_{z_1^2+z_2^2\leqslant 1} \mbox{d}z_1\mbox{d}z_2+\int_{z_1^2+z_2^2>1} \frac{\mbox{d}z_1\mbox{d}z_2}{(z_1^2+z_2^2)^{3/2}}\\
&=3\pi
\end{aligned}
$$
after a polar change of coordinates. We can therefore conclude that $\omega_{\infty}=3\pi$.

\subsubsection{Computation of $\omega_p$ for odd $p$}
Thanks to the remarks of \cite[Page 187]{Sa}, one can construct a model $\widetilde{\mathcal{V}}$ over $\mbox{Spec}(\ZZ)$ satisfying the conditions of \cite[Notation 4.5]{P03} with $S=\{\infty, 2\}$. Hence, one can consider the reduction $\widetilde{\mathcal{V}}_p$ modulo $p$ of $\widetilde{\mathcal{V}}$ for every prime number $p$. 

The torus $T$ has good reduction $T_p$ for every prime $p>2$ since $p$ is not ramified in $\QQ(i)$ and $T_p$ is a split torus of rank 2 if $p \equiv 1\Mod{4}$ and a non-split torus of rank 2 split by $\mathbb{F}_{p^2}$ if $p \equiv 3\Mod{4}$. Hence, the reduction $\widetilde{\mathcal{V}}_p$ modulo $p$ can be realized as the toric variety over $\mathbb{F}_p$ under the torus $T_p$ given by the fan $\Delta'$ which is invariant under $\mbox{Frob}_p$. Since the fan stays regular and complete, we can conclude that $\widetilde{\mathcal{V}}_p$ is smooth and hence that $\widetilde{\mathcal{V}}$ has good reduction modulo $p>2$ (see \cite{Danilov}). 

We can therefore apply \cite[Corollary 6.7]{Ja} to obtain for all odd $p$ the following expression
$$
\omega_p=\frac{\#  \widetilde{\mathcal{V}} (\FF_p)}{p^2}.
$$
Now alluding to Weil's formula, we obtain
$$
\omega_p=1+\frac{{\rm Tr} ({\rm Frob}_p |{\rm Pic}(\widetilde{V}_{\overline{\QQ}}))}{p}+\frac{1}{p^2}=1+\frac{4+3\chi(p)}{p}+\frac{1}{p^2}
$$
by using the description of the action of $\mbox{Frob}_p$ on ${\rm Pic}(\widetilde{V}_{\overline{\QQ}})$ given in~\S 2.4.1.


\subsubsection{Computation of $\omega_2$}
For $p=2$, the model $\tilde{\mathcal{V}}$ having bad reduction, we appeal to the lemma 6.6 of \cite{Ja} to compute $\omega_2$. By \cite[Proposition 2.10]{BT98} and noting that the smooth assumption is not necessary, one gets
$$
\omega_{2,\widetilde{V}}(\widetilde{V}(\QQ_2))=\omega_{2,\widetilde{V}}(\pi^{-1}(U)), \quad \omega_{2,{V}}({V}(\QQ_2))=\omega_{2,{V}}(U).
$$
Now an analogous computation as the one in \S 2.4.3 yields that both quantities $\omega_{2,\widetilde{V}}(\pi^{-1}(U))$ and $\omega_{2,{V}}(U)$ are equal to the expression
$$
\int_{W}\frac{\mbox{d}z_1\mbox{d}z_2}{\max\left\{1,|z_1^2+z_2^2|_2,|z_1(z_1^2+z_2^2)|_2,|z_2(z_1^2+z_2^2)|_2 \right\}}
$$
with $W=\{(z_1,z_2) \in \QQ_2^2 : (z_1+1)^2+z_2^2 \neq 0\}.$
Therefore, $\omega_2$ is equal to $\omega_{2,{V}}({V}(\QQ_2))$ and \cite[Remark 6.8]{Ja} implies that
$$
\omega_2=\lim_{n\to +\infty} \frac{N(2^n)}{2^{3n}}, 
$$ 
where
$$
N(2^n):=\#\big\{ {\bf x}\bmod{2^n}: \; x_0(x_1^2+x_2^2)\equiv x_3^3 \bmod{2^n}\big\} . 
$$
Let $v_2(x_1^2+x_2^2)=k$ and  $v_2(x_0)=k_0$. If $k=1+2k'$ is odd and $1+2k'<n$, then the number of $(x_1,x_2)$ satisfying $v_2(x_1^2+x_2^2)=k$ is $2^{2n-2k'-2}$. There are $2^{n-(1+2k'+k_0)/3-1}$ ways to choose $x_3$ and then $2^{2k'+1}$ choices for $x_0$. 
Then, in the case where $v_2(x_1^2+x_2^2)$ is odd, the number of solutions is 
asymptotic to  
$$ 
2^{3n} 
\sum_{\substack{3\mid 1+2k'+k_0'\\ k'\geq 0}} 
2^{-2-(1+2k'+k_0')/3}\sim \frac{5}{6}2^{3n}. 
$$
The number of $(x_1,x_2)$ satisfying $v_2(x_1^2+x_2^2)=2k'$ is, at least for $2k'<n$, 
equal to $2^{2n-2k'-1}.$ 
There are $2^{n-(2k'+k_0)/3-1}$ ways to choose $x_3$ and then $2^{2k'}$ choices for $x_0$. 
Summing over $3\mid 2k'+k_0$ and $k'\geq 0$ we get the contribution of the case $ v_2(x_1^2+x_2^2)$ even in $N(2^n)$, which is asymptotic to 
$\frac{7}{6}\cdot 2^{3n}.$ It follows that  
$$
\omega_2=2=1+\frac{2+3\chi(2)+2\chi^2(2)}{2}+\frac{\chi^2(2)}{2^2}. 
$$


\subsubsection{Conclusion} Bringing everything together yields the following expression for the Peyre constant
$$
\alpha(\widetilde{V})\beta(\widetilde{V})\tau(\widetilde{V})
=\frac{7}{216} (3\pi)\bigg(\frac{\pi}{4}\bigg)^3\tau.
$$
This is in agreement with the constant $C$ in \eqref{TmC=}.

\section{Proof of Theorem~\ref{t:main}}
By symmetry, we have
$$
N_U(B)
=\#\bigg\{
{\bf x}\in E  : x_0(x_1^2+x_2^2)=x_3^3, \;  
\max\Big\{ x_0,\sqrt{x_1^2+x_2^2}\Big\}\leq B \bigg\}, 
$$
where 
$E:=\{{\bf x}\in \NN\times \ZZ^2\times \NN : \gcd(x_0, x_1, x_2, x_3)=1\}$ 
and 
$\NN=\ZZ_{\geq 1}$.  
As in \cite{B98}, we parametrize  $x_1^2+x_2^2$, $x_0$ and $ x_3$ by 
$$
x_1^2+x_2^2=n_1n_2^2n_3^3,\quad 
x_0=n_1^2n_2n_4^3, \quad 
x_3=n_1 n_2n_3n_4 , 
$$ 
where $n_1$ and $n_2$ are squarefree and $\gcd (n_1,n_2)=1$ 
which is equivalent to 
$\mu^2(n_1n_2)=1$. It follows that  
$$
N_U(B)=4\sum_{\substack{{\bf n}\in \NN^4\\ \mu^2(n_1n_2)=1\\ n_1^2n_2n_4^3\leq B\\ n_1n_2^2n_3^3\leq B^2}}r(n_1n_2^2n_3^3,n_1n_2n_4)
$$
where 
$$
r(n,m):=\tfrac  14\#\big\{ (x_1,x_2)\in \ZZ^2 : x_1^2+x_2^2=n,\; ((x_1,x_2),m)=1\big\}. 
$$

Here, we remark that our choice of height function is particularly well suited to handle the expression $r(n,m)$.

Let $\chi$ be the non-principal character modulo $4$ and $r_0:=1*\chi$.
The quantity $r(n,m)$ is a multiplicative arithmetic function in $n$, and we have
$$r(n,m):=\prod_p r\big(p^{v_p(n)}, p^{v_p(m)}\big).$$
We use  the fact that, when $\nu\geq 1$, 
$$r(p^\nu,p)=\begin{cases}2& \mbox{if   $p\equiv 1\bmod 4$},\\
0& \mbox{if   $p\equiv 3\bmod 4$},
\\
1& \mbox{if   $\nu=1$, $p=2$},
\\
0& \mbox{if   $\nu\geq 2$, $p=2$}.\end{cases}$$
Then, when $\nu_1+\nu_2\leq 1$, the value of $r(p^{\nu_1+2\nu_2+3\nu_3}, p^{\nu_1+ \nu_2+ \nu_4})$ is given by
$$ \begin{cases}
1 & \mbox{if $(\nu_1,\nu_2,\nu_3,\nu_4)=(0,0,0,\nu_4)$,}\\
r_0(p^{3\nu_3}) & \mbox{if $(\nu_1,\nu_2,\nu_3,\nu_4)=(0,0,\nu_3,0)$,} \\
0& \mbox{if $(\nu_1,\nu_2)=(0,0)$, $\min \{ \nu_3,\nu_4\}\geq 1$, $p\equiv 2,3\bmod 4$},\\
2& \mbox{if $(\nu_1,\nu_2)=(0,0)$, $\min \{ \nu_3,\nu_4\}\geq 1$, $p\equiv 1\bmod 4$},\\
0& \mbox{if  $\min \{ \nu_1,\nu_3\}\geq 1$, $p\equiv 2,3 \bmod 4$},\\
2& \mbox{if  $\min \{ \nu_1,\nu_3\}\geq 1$, $p\equiv 1 \bmod 4$},\\
0& \mbox{if  $ \nu_1= 1$, $\nu_3=0$, $p\equiv 3  \bmod 4$},\\
2& \mbox{if  $ \nu_1= 1$, $\nu_3=0$, $p\equiv 1 \bmod 4$},\\
1& \mbox{if  $ \nu_1= 1$, $\nu_3=0$, $p=2$},\\
0& \mbox{if  $ \nu_2= 1$, $p\equiv 2,3 \bmod 4$},\\
2& \mbox{if  $\nu_2= 1$, $p\equiv 1 \bmod 4$}.\\
\end{cases}$$

The Dirichlet series associated to this counting problem
is
$$F(s_1,s_2):=\sum_{\substack{{\bf n}\in \NN^4\\ \mu^2(n_1n_2)=1}}\frac{r(n_1n_2^2n_3^3,n_1n_2n_4)}{n_1^{2s_1+s_2}n_2^{s_1+2s_2} n_3^{3s_2}n_4^{3s_1}}, \quad \left(\Re e(s_1),\Re e(s_2)>\frac{1}{3}\right).$$
It can be written as an Euler product of $F_p(s_1,s_2)$, where
$$
F_2(s_1,s_2)=\frac{1}{1-2^{-3s_1}}+\frac{1}{2^{3s_2}-1}+\frac{1}{2^{2s_1+s_2}(1-2^{-3s_1})},
$$
$$
F_p(s_1,s_2)
=\frac{1}{1-p^{-3s_1}}+\frac{1}{p^{6s_2}-1}, 
$$ 
if $p\equiv 3\bmod 4$ and
\begin{align*} 
F_p(s_1,s_2)
=&\frac{1}{1-p^{-3s_1}}+\frac{4-p^{-3s_2}}{p^{3s_2}(1-p^{-3s_2})^2} \cr 
&+2\frac{p^{-3( s_1+s_2)}+p^{-(2s_1+s_2)}+p^{-(s_1+2s_2)}}{(1-p^{-3 s_2} )(1-p^{-3 s_1})} ,
\end{align*}
if $p\equiv 1\bmod 4$.
For $\mathfrak{R} e(s)>1$, let
\begin{align*} 
\zeta_{\QQ(i)}(s) 
&:=\sum_{n\geq 1}\frac{r_0(n)}{ n^s}=\zeta(s)L(s,\chi)\\ 
&=\frac{1}{ 1-2^{-s}}\prod_{p\equiv 3\bmod 4}\frac{1}{ 1-p^{-2s}} 
\prod_{p\equiv 1\bmod 4}\frac{1}{( 1-p^{-s})^2}. 
\end{align*}
Let ${\bf s}$ stand for the pair $(s_1, s_2)$.  Then    
there exists $G$ such that 
$$
F({\bf s})=\zeta (3s_1)\zeta_{\QQ(i)}(3s_2)^2\zeta_{\QQ(i)}(s_1+2s_2)\zeta_{\QQ(i)}(s_2+2s_1)G({\bf s}). 
$$ 
The above quantity $G({\bf s})$ can be written as an Euler product of $G_p({\bf s})$ where 
$$
G_2(1/3, 1/3)=2^{-3}  
$$
while, for $p\equiv 3\bmod 4$ 
$$
G_p({\bf s})= \big(1-p^{-2(s_1+2s_2)}\big)^2\big(1-p^{-2(2s_1+s_2)}\big)
\big(1-p^{-3(s_1+2s_2)}\big). 
$$  
and for $p\equiv 1\bmod 4$, 
\begin{align*} G_p({\bf s})
=& \big(1-p^{-3s_2}\big)^4 
\big(1-p^{-(s_1+2s_2)}\big)^2
\big(1-p^{-(2s_1+s_2)}\big)^2
\\
&+\big(1-p^{-3s_1}\big) 
\big(4p^{-3s_2}- p^{-6s_2}\big)  \\ 
& \quad \times\big(1-p^{-3s_2}\big)^2
\big(1-p^{-(s_1+2s_2)}\big)^2 \big(1-p^{-(2s_1+s_2)}\big)^2\\ 
&+2\big(p^{-3( s_1+s_2)}+p^{-(2s_1+s_2)}+p^{-(s_1+2s_2)})\\ 
&\quad \times 
\big(1-p^{-3s_2}\big)^3 \big(1-p^{-(s_1+2s_2)}\big)^2 \big(1-p^{-(2s_1+s_2)}\big)^2. 
\end{align*}
The series $F$ is absolutely convergent when $\Re e (s_1)>\tfrac 13  $ and $\Re e (s_2)>\tfrac 13$ and the function $G$ can be analytically continued to
$\Re e (s_1)>\tfrac 16  $ and $\Re e (s_2)>\tfrac 16 $. 
Moreover, we have
\begin{eqnarray}\label{prodG}
G\bigg(\frac 13,\frac 13\bigg) 
&=&\frac{1}{2^3} \prod_{p\neq 2}\bigg( 1-\frac{1}{p}\bigg)^4 
\bigg( 1-\frac{\chi(p)}{p}\bigg)^3  \bigg(1+\frac{4+3\chi(p)}{p}+\frac{1}{p^2}\bigg) 
\nonumber\\ 
&=&\tau.
\end{eqnarray}
Thus $F$ satisfies the assumptions of Theorem~1 of \cite{B01} with $(\beta_1,\beta_2) =(1,2)$, $(\alpha_1,\alpha_2) =(\tfrac 13,\tfrac 13)$, 
\begin{align*}
\ell_1({\bf s})=3s_1, \quad \ell_2({\bf s})=\ell_3({\bf s})=3s_2,\\ 
\ell_4({\bf s})=s_1+2s_2, \quad \ell_5({\bf s})=2s_1+s_2.
\end{align*}
It follows that there exists a constant $\vartheta>0$ and a polynomial  
$Q\in \RR[X]$ of degree $3$ such that 
$$
N_U(B)=BQ(\log B)+O(B^{1-\vartheta}). 
$$
Now alluding to Theorem~2 of \cite{B01} to get the  leading coefficient $C$  of $Q$, 
we obtain
\begin{align*}Q(\log B)&\underset{B \to +\infty}{\sim} \frac{4L(1,\chi)^4G(\tfrac 13,\tfrac 13)}{B}\int_{\substack{(y_1,y_2,y_3,y_4,y_5)\in [1,+\infty[^5\\ y_1^3y_4y_5^2\leq B ,\, y_2^3y_3^3y_4^2y_5\leq B^2 }}\d {\bf y}\\
&\underset{B \to +\infty}{\sim} 4 \bigg(\frac{\pi}{4}\bigg)^4 
G\Big(\frac13,\frac13\Big)
\int_{\substack{( y_3,y_4,y_5)\in [1,+\infty[^3\\  y_4y_5^2\leq B ,\,  y_3^3y_4^2y_5\leq B^2 }}\frac{\d {\bf y}}{y_3y_4y_5}
\\
&\underset{B \to +\infty}{\sim}\frac{\pi^4}{2^6}G\Big(\frac13,\frac13\Big) (\log B)^3{I},
\end{align*}
where 
\begin{align*}{I}
:=& {\rm vol}\big\{( t_3,t_4,t_5)\in \RR_+^3\,:\,  t_4+2t_5 \leq 1 ,\,  3t_3+2t_4 +t_5\leq 2  \big\}.\end{align*}
An straightforward computation immediately yields
\begin{align*} 
{I}
&= \frac13
\int_{0}^{1/2} \int_{0}^{ 1-2t_5 } (2-2t_4-t_5)\d t_4\d t_5 =\frac {7}{72}, 
\end{align*}
and therefore the leading coefficient $C$ of $Q$ is given by
$$
C=\frac {7}{216}\Big(\frac{\pi}{4}\Big)^3(3\pi)G\Big(\frac13,\frac13\Big). 
$$  
By \eqref{prodG} we have $G(\tfrac 13,\tfrac 13)=\tau$, from which \eqref{TmC=} follows. 
This completes the proof. 
\medskip

\section{The descent argument}

Our main argument in order to derive Theorem~\ref{t:main} in section 3 consists of a descent from our original variety $\tilde{V}$ onto the variety of equation 
$$
x_1^2+x_2^2=n_1n_2^2n_3^3.
$$ 
Although this is not required to verify Peyre's conjecture since $\tilde{V}$ is a rational variety, it is particularly interesting to find out which torsor were used during this descent argument because $\tilde{V}$ is a non-split variety. Indeed, as mentioned in \cite{DP}, versal torsors parametrizations (see \cite{Coll} for precise definitions) are mostly used in the case of split varieties and the question of the right approach in the case of non-split varieties is quite natural. Using the Cox ring machinery over nonclosed fields developed in \cite{DP}, all known examples of Manin's conjecture in the case of non-split varieties derived by means of a descent rely on a descent on quasi-versal torsors in the sense of \cite{Coll}. For example, the descent in \cite{Dest} is a descent on torsors of injective type $\mbox{Pic}(V_{\mathbb{Q}(i)})\hookrightarrow \mbox{Pic}(V_{\overline{\mathbb{Q}}})$ whereas it is shown in \cite{DP} that the \textit{ad hoc} descent used in \cite{BB} is a descent on the torsor of injective type $\mbox{Pic}(V)\hookrightarrow \mbox{Pic}(V_{\overline{\mathbb{Q}}})$. Here, we now show in the following lemma that the descent corresponds to a torsor of a different type, which is not quasi-versal. 

With the notations of \S 2.2, we set $\hat{T}=[D_1]\mathbb{Z}\oplus [D_3]\mathbb{Z}\oplus [D_4] \mathbb{Z}$ and $\lambda: \hat{T} \hookrightarrow \mbox{Piv}(\tilde{V}_{\overline{\mathbb{Q}}})$ be the natural embedding.

\begin{lemma}
	Every Cox ring of injective type $\lambda$ is isomorphic to the $\mathbb{Q}$-algebra
	$$
	R=\mathbb{Q}[x_1,x_2,\eta_1,\eta_2,\eta_3,\eta_4]/\left( x_1^2+x_2^2-\eta_2\eta_3^2\eta_4^3\right).
	$$
\end{lemma}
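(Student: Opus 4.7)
The plan is to apply the Cox-ring formalism for non-split toric varieties developed in \cite{DP} to the smooth toric resolution $\widetilde{V}$ and the sublattice $\hat{T}\subset \Pic(\widetilde{V}_{\bar{\QQ}})$. Over $\bar{\QQ}$, $\widetilde{V}_{\bar{\QQ}}$ is a smooth toric variety defined by the fan $\tilde\Delta$ with nine rays, so its total Cox ring is polynomial in nine variables $y_\rho$ (one per ray) graded by $\Pic(\widetilde{V}_{\bar{\QQ}})\simeq\ZZ^7$; the Galois group $G$ acts by swapping $y_{\rho_2}\leftrightarrow y_{\rho'_2}$ and $y_{\tilde\rho_i}\leftrightarrow y_{\tilde\rho'_i}$ ($i=1,2,3$), as already recorded in \S 2.1 and \S 2.4.1. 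A Cox ring of injective type $\lambda$ is then realised, following \cite{DP}, by pushing out this universal torsor along $\lambda$ and Galois-descending the resulting $\hat{T}$-graded algebra to $\QQ$.

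I would next exhibit six natural generators. Three of them are the $G$-invariant monomials $\eta_1:=y_{\rho_1}$, $\eta_3:=y_{\tilde\rho_1}y_{\tilde\rho'_1}$ and $\eta_4:=y_{\tilde\rho_2}y_{\tilde\rho'_2}$, of $\Pic$-degrees $[D_1]$, $[D_3]$, $[D_4]$ respectively, i.e.\ the three generators of $\hat{T}$. A fourth generator $\eta_2$ arises from the invariant combination $y_{\rho_2}y_{\rho'_2}$ completed by enough powers of $y_{\tilde\rho_3}y_{\tilde\rho'_3}$ to push its class from $[D_2]$ (not in $\hat{T}$) into $\hat{T}$ via the relation $[D_5]=2[D_1]+[D_2]-[D_4]$ noted in \S 2.2. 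Finally, $x_1$ and $x_2$ are the $G$-invariant real and imaginary parts $(y_{\rho_2}+y_{\rho'_2})/2$ and $(y_{\rho_2}-y_{\rho'_2})/(2i)$ obtained from the change of variables used in \S 2.1 to descend $P_{\Delta,k}$ to $\QQ$.

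The relation $x_1^2+x_2^2=\eta_2\eta_3^2\eta_4^3$ is then forced: it is the $\hat{T}$-homogeneous translation of the defining equation $x_0(x_1^2+x_2^2)=x_3^3$ of $V$, or equivalently of the parametrization
$$
x_1^2+x_2^2=n_1n_2^2n_3^3,\qquad x_0=n_1^2n_2n_4^3,\qquad x_3=n_1n_2n_3n_4
$$
used in Section 3 under the dictionary $n_1\mapsto\eta_2$, $n_2\mapsto\eta_3$, $n_3\mapsto\eta_4$, $n_4\mapsto\eta_1$, once $x_0$ and $x_3$ have been eliminated by means of the toric monomial relations.

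The main obstacle is the \emph{completeness} of this presentation: one must show that no further $G$-invariant generators or relations arise, so that $R$ is indeed a complete intersection of the expected Krull dimension $\dim\widetilde{V}+\rank\hat{T}=5$. I would handle this by a graded-piece Hilbert-series comparison, verifying for each $\alpha\in\hat{T}$ that the six listed generators already span the Galois invariants of $H^0(\widetilde{V}_{\bar{\QQ}},\mathcal{O}(\lambda(\alpha)))$, and then invoking the uniqueness criterion for Cox rings of fixed injective type recalled in \cite{DP} to conclude.
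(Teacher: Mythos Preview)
Your overall plan---pass to the polynomial Cox ring of $\widetilde V_{\bar\QQ}$, restrict to the $\hat T$-graded piece, then descend by Galois---is exactly the route the paper takes. But your construction of the generators $x_1,x_2$ is wrong, and this is not a cosmetic issue.

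You set $x_1=(y_{\rho_2}+y_{\rho'_2})/2$ and $x_2=(y_{\rho_2}-y_{\rho'_2})/(2i)$. The elements $y_{\rho_2},y_{\rho'_2}$ have $\Pic$-degrees $[T_2],[T'_2]$, neither of which lies in $\hat T=[D_1]\ZZ\oplus[D_3]\ZZ\oplus[D_4]\ZZ$, so these combinations do not even belong to the $\hat T$-graded subalgebra you are trying to describe. Moreover with your choice one gets $x_1^2+x_2^2=y_{\rho_2}y_{\rho'_2}$, which is certainly not $\eta_2\eta_3^2\eta_4^3$ under any labelling. You have confused the torus coordinates $z_1,z_2$ used in \S 2.1 (for which $x_1=(z_1+z_2)/2$, $x_2=(z_1-z_2)/(2i)$ is correct) with the Cox-ring variables attached to the boundary divisors $T_2,T'_2$; these are entirely different objects.

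In the paper the conjugate pair that yields $x_1,x_2$ is far less obvious: one has to solve the linear system determining which monomials in the nine Cox variables have degree in $\hat T$, and this produces the degree-nine monomial $\eta_5=\tilde t_1\tilde t_2^{\,2}t_2^{\,3}\tilde t_3^{\,2}\tilde t'_3$ and its Galois conjugate $\bar\eta_5$, with $\eta_5\bar\eta_5=\eta_2\eta_3^{\,2}\eta_4^{\,3}$; only then does one set $x_1=(\eta_5+\bar\eta_5)/2$, $x_2=(\eta_5-\bar\eta_5)/(2i)$. Note also that the paper's labels are $\eta_2=\tilde t_1\tilde t'_1$, $\eta_3=\tilde t_2\tilde t'_2$, $\eta_4=t_2t'_2\tilde t_3\tilde t'_3$, so your assignment $\eta_3\leftrightarrow[D_3]$, $\eta_4\leftrightarrow[D_4]$ is off by one and would not reproduce the stated relation. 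Finally, the paper handles completeness not by a Hilbert-series comparison but by explicitly solving the linear system for $\bar R_m$; your proposed verification is left entirely vague and would in any case have to be carried out after the correct generators are found.
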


\begin{proof}
	The proof is very similar to the one in \cite[Proposition 2.71]{Pie} and that is why we will not repeat all the details here. Since $\tilde{V}_{\overline{\mathbb{Q}}}$ is a split toric variety, we know by \cite{Sa} that a Cox ring of identity type for $\tilde{V}_{\overline{\mathbb{Q}}}$ is given by
	$$
	\overline{\mathcal{R}}=\overline{\mathbb{Q}}[t_1,t_2,t'_2,\tilde{t}_1,\tilde{t}'_1,\tilde{t}_2,\tilde{t}'_2,\tilde{t}_3,\tilde{t}'_3]
	$$
	where $t_i=\mbox{div}(T_i)$, $t'_i=\mbox{div}(T'_i)$, $\tilde{t}_i=\mbox{div}(\tilde{T}_i)$ and $\tilde{t}'_i=\mbox{div}(\tilde{T}'_i)$. We then have by \cite[Remark 2.51]{Pie} that every Cox ring of injective type $\lambda$ is isomorphic to the ring of invariant of
	$$
	\bigoplus_{m \in \hat{T}}\overline{R}_m
	$$
	where $\overline{R}_m$ is the vector space generated by the degree $m$ elements of $\overline{\mathcal{R}}$. For $m \in \hat{T}$ given by
	$
	m=\left[a_1 D_1+a_3D_3+a_4D_4\right],
	$
	we have to solve the following linear system with $e_i,e'_i,\tilde{e}_i,\tilde{e}'_i\geqslant 0$ to determine $\overline{R}_m$ 
	$$
	\begin{aligned}
	&\left[e_1T_1+e_2T_2+e'_2T'_2+\tilde{e}_1\tilde{T}_1+\tilde{e}'_1\tilde{T}'_1+\tilde{e}_2\tilde{T}_2+\tilde{e}'_2\tilde{T}'_2+\tilde{e}_3\tilde{T}_3+\tilde{e}'_3\tilde{T}'_3 \right]\\
	&\qquad  \qquad \qquad \qquad \qquad \qquad \qquad \qquad \qquad \quad =\left[a_1 D_1+a_3D_3+a_4D_4\right].
	\end{aligned}
	$$
Alluding to the fan $\Delta'$ and \cite[Proposition 1.15]{BT98}, we get that this linear system is equivalent to
	$$
	\left\{
\begin{array}{l}
	\tilde{e}'_3+\tilde{e}_1=\tilde{e}_3+\tilde{e}'_1\\
	\tilde{e}'_2+\tilde{e}_3-\tilde{e}'_3=\tilde{e}_2+\tilde{e}'_3-\tilde{e}_3\\
	e_2+\tilde{e}'_3-2\tilde{e}_3=e'_2+\tilde{e}_3-2\tilde{e}'_3=0.
\end{array}	
	\right.
	$$
	This easily yields that $\overline{R}$ is generated by
	$$
	\eta_1=t_1, \quad \eta_2=\tilde{t}_1\tilde{t}'_1, \quad \eta_3=\tilde{t}_2\tilde{t}'_2, \quad \eta_4=t_2t'_2\tilde{t}_3\tilde{t}'_3, \quad \eta_5=\tilde{t}_1{\tilde{t}_2}^2t_2^3{\tilde{t}_3}^2{\tilde{t}'}_3,
	$$
	and $\overline{\eta}_5$ the conjugate of $\eta_5$ with the relation
	$$
	\eta_5\overline{\eta}_5=\eta_2\eta_3^2\eta_4^3.
	$$
	Using the Galois invariant variables 
	$$
	x_1=\frac{\eta_5+\overline{\eta}_5}{2}, \quad x_2=\frac{\eta_5-\overline{\eta}_5}{2i}
	$$
	one finally ensures that every Cox ring of injective type $\lambda$ is isomorphic to~$R$.
\end{proof}

\begin{ack}{The authors are grateful to thank Ulrich Derenthal and Marta Pieropan for useful discussions. This article was initiated during a 
workshop on number theory held at the Weihai Campus of 
Shandong University in July 2017. The first, third and fourth authors are 
partially supported by the program  
PRC 1457 - AuForDiP (CNRS-NSFC). The third author is supported by the National Science Foundation of China under Grant 11531008, the Ministry of Education of China under Grant IRT16R43, and the Taishan Scholar Project of Shandong Province. The hospitality and financial support of these
institutions are gratefully acknowledged.    
}\end{ack}


\begin{thebibliography}{9}

\bibitem{BT98}
V. Batyrev and Y. Tschinkel, 
{Manin's conjecture for toric varieties},  {\em J. Alg. Geom.}, {\bf 7}, (1998), 15--53.

\bibitem{BT98b}
V. Batyrev and Y. Tschinkel,  Tamagawa numbers of polarized algebraic varieties. 
{\em Ast\'erisque}, {\bf 251}, (1998), 299--340. 

\bibitem{BT95}
V. Batyrev and Y. Tschinkel,  
Rational points of bounded height on compactifications of anisotropic tori. 
{\em International Mathematics Research Notices}, {\bf 12}, (1995), 591--635. 

\bibitem{BBS}
V. Blomer and J. Br{\"u}dern and P. Salberger,  
On a certain senary cubic form. 
{\em Proc. L. Math. Soc.}, {\bf 108}, (2014), 911--964. 

\bibitem{B98} 
R. de la Bret\`eche,
Sur le nombre de points de hauteur born\'ee d'une certaine
surface cubique singuli\`ere, {\em Ast\'erisque}, {\bf 251},
(1998), 51--77.

\bibitem{B01} 
R. de la Bret\`eche,
Estimation de sommes multiples de fonctions arithm\'etiques, 
{\em Compositio Mathe\-matica}, {\bf 128}, n$^{\circ}$\thinspace  3, II, (2001), 261--298. 
 
\bibitem{BB} 
R. de la Bret\`eche and T. Browning,
On {M}anin's conjecture for singular del {P}ezzo surfaces of degree four {II}, 
{\em Math. Proc. Cambridge Philos. Soc.}, {\bf 143}(3), (2007), 579--605.  

\bibitem{BSD}
R. de la Bret\`eche and P. Swinnerton-Dyer, 
 {Fonction z\^eta des hauteurs associ\'ee  \`a  une certaine surface cubique},  {\em Bull. Soc. Math. France}, {\bf 135},  (2007), 65--92.
	
\bibitem{Coll} J.-L. Colliot-Th\'el\`ene and J.-J. Sansuc,
La descente sur mes vari\'et\'es rationnelles {II},
{\em Duke Math. J.}, {\bf 54}, (1987), 375--492.

\bibitem{CT88} F. Coray and M. A. Tsfasman,
Arithmetic on singular del {P}ezzo surfaces,
{\em Proc. London Math. Soc. {\rm(3)}}, {\bf 57}(1), (1988), 25--87.	

\bibitem{cox}
D. A. Cox and J. B. Little and H. K. Schenck,
{\em Toric varieties},
Graduate studies in mathematics, American Mathematical Soc., (2011).

\bibitem{Danilov}
V. I. Danilov,  
The geometry of toric varieties. 
{\em Russ. Math. Surveys}, {\bf 33}, n.2, (1978), 97--154. 

\bibitem{Demazure}
M. Demazure,  
Sous-groupes alg\'ebriques de rang maximum du groupe de {C}remona. 
{\em Ann. Sci. \'Ecole Norm. Sup.} (4), {\bf 3}, (1970), 507--588. 

\bibitem{DEJ13} U. Derenthal, A-S. Elsenhans and J. Jahnel,
On the factor alpha in Peyre's constant,
{\em Mathematics of Computation}, {\bf 83:286}, (2013), 965--977.

\bibitem{DJ13} U. Derenthal and F. Janda,
Gaussian rational points on a singular cubic surface,
{\em London Math. Soc. Lecture Note Ser.}, {\bf 405}, Cambridge Univ. Press, Cambridge, (2013), 210-230. 

\bibitem{DJT08} U. Derenthal, M. Joyce and Z. Teitler,
The nef cone volume of generalized del Pezzo surfaces, 
{\em Algebraic Number Theory}, {\bf 2:2}, (2008), 157--182.

\bibitem{DP} U. Derenthal, M. Pieropan,
Cox rings over nonclosed fields, 
{\em to appear in J. Lond. Math. Soc.}, (2018), doi:10.1112/jlms.12178 .

\bibitem{Dest} K. Destagnol,
La conjecture de {M}anin pour certaines surfaces de {C}h\^atelet, 
{\em Acta Arithmetica}, {\bf 174}, (2), (2016), 31--97.

\bibitem{Fo}
\'E. Fouvry,
 {Sur la hauteur des points d'une certaine surface cubique singuli\`ere},
{\em Ast\'{e}risque}, {\bf 251}, (1998), 31--49.

\bibitem{Fulton1}
W. Fulton,
 {\em Introduction to toric varieties},
{Ann. of Math. Studies}, {\bf 131}, Princeton University Press, (1993).

\bibitem{Fr13} C. Frei,
Counting rational points over number fields on a singular cubic surface,
{\em Algebra Number Theory},  {\bf 7}, (2013), no. 6, 1451--1479.

\bibitem{FP} C. Frei and M. Pieropan,
O-minimality on twisted universal torsors and {M}anin's conjecture over number fields,
{\em Ann. Sci. \'Ecole Norm. Sup.} (4), {\bf 49}, (2016), 757--811. 

\bibitem{HBM}
D. R. Heath-Brown and B. Z. Moroz, 
{The density of rational points on the cubic surface $X^3=X_1 X_2 X_3$}, 
{\em Math. Proc. Cambridge Philos. Soc.}, {\bf 125},  (1999), no. 3, 385--395. 



\bibitem{Ja}
J. Jahnel,
\textit{Brauer groups, Tamagawa measures, and rational points on algebraic varieties}, 
Mathematical Surveys and Monographs, {\bf 198},  American Mathematical Society, Providence, RI, 2014. viii+267 pp. ISBN: 978-1-4704-1882-3.

\bibitem{KM98}
J. Koll\'ar and S. Mori,
\textit{Birational geometry of algebraic varieties}, 
Cambridge University Press, 1998.

\bibitem{LWZ17}J. Liu, J. Wu and Y. Zhao,
Manin's conjecture for a class of singul\-ar cubic hypersurfaces,
{\em International Mathematics Research Notices}, (2017),  {doi:10.1093/imrn/rnx179}.

\bibitem{Oda}
T. Oda,
{\em Convex bodies and algebraic geometry},
{Springer}, (1988). 

\bibitem{P95}
E. Peyre,
Hauteurs et nombres de Tamagawa sur les
vari\'et\'es de Fano,
{\em Duke Math.\ J.},  {\bf 79}, (1995),
101--218. 


\bibitem{P03}
E. Peyre, Points de hauteur bornŽ\'ee, topologie adŽ\'elique et mesures de Tamagawa,
Les XXII-\`emes Journ\'ees Arithm\'etiques (Lille, 2001).
J. Th\'eor. Nombres Bordeaux 15 (2003), no. 1, 319--349.
\bibitem{PT}
E. Peyre and Y. Tschinkel, Tamagawa numbers of diagonal cubic surfaces, numerical evidence,
{\em Math. Comput.}, vol. 70, {\bf 233}, (2001) , 367--387.

\bibitem{Pie}
M. Pieropan,
Torsors and generalized {C}ox rings for {M}anin's conjecture,
{\em PhD thesis}, (2015).

\bibitem{Sa}
P. Salberger, 
Tamagawa measures on universal torsors and points of bounded height on Fano varieties,  
{\em Ast\'erisque}, {\bf 251}, (1998), 91--258.

\bibitem{Scha}
C. Scheider, 
{\em Real and \'etale cohomology},  
{Lecture Notes in Mathematics}, {Springer}, (1994).

\bibitem{Serre}
J.P. Serre, {\em Topics in Galois Theory.} New York: A K Peters/CRC Press, (2007).

\end{thebibliography}
\end{document}